\documentclass{amsart}

\usepackage{latexsym, amssymb, amscd, amsthm, amsxtra, amsmath,amsthm }
\usepackage{graphics, graphicx, color}
\usepackage{ifpdf}
\usepackage[format=hang,indention=-1cm,small]{caption}
\usepackage[caption=false]{subfig}
\usepackage{multirow}
\usepackage{hyperref}
\usepackage{mathrsfs}
\usepackage[english]{babel}
\usepackage{pst-node}
\usepackage{amsmath,amscd}
\usepackage[shortlabels]{enumitem}
\usepackage{multirow}
\usepackage{array}
\newtheorem{thm}{Theorem}[section]

\newtheorem{lem}[thm]{Lemma}

\theoremstyle{definition}

\newtheorem{ques}[thm]{Question}
\theoremstyle{remark}

\newcommand{\A}{\mathbb{A}}
\newcommand{\PP}{\mathbb{P}}
\newcommand{\FF}{\mathbb{F}}

\newcommand{\ZZ}{\mathbb{Z}}

\newcommand{\al}{\alpha}


\newcommand{\cO}{\mathcal{O}}


\newcommand{\fm}{\mathfrak{m}}
\newcommand{\fn}{\mathfrak{n}}

\newcommand{\orp}{\operatorname{ord}_p}
\newcommand{\GL}{\operatorname{GL}}
\newcommand{\End}{\operatorname{End}}
\newcommand{\Aut}{\operatorname{Aut}}
\newcommand{\spec}{\operatorname{Spec}\,}

\title{Integral Periodic Orbits on Affine Spaces}
\author{Minchan Kang}
\address{Department of Mathematical Sciences, Seoul National University, Gwanak-ro 1, Gwankak-gu, Seoul, South Korea 08826}
\email{azboy@snu.ac.kr}
\date{\today}
\begin{document}
\maketitle
\begin{abstract}
In this paper, we give an elementary proof on the existence of an effective uniform upper bound on the size of integral periodic orbits of a single endomorphism in an affine space, dependent solely on its dimension. In fact, we derive a formula relating the primitive period to the local primitive period obtained through reduction modulo prime number. In particular, we prove that the size of any integral periodic orbit in the affine plane does not exceed 24.
\end{abstract}
\section{Introduction}
We investigate dynamical systems arising from polynomial maps with integer coefficients. Let $X$ be a set, and consider an endomorphism $f:X\to X$. An element $P\in X$ is said to be \emph{$f$-periodic} if there exists a positive integer $n\geq1$ such that $f^n(P)=P$. If $P$ is $f$-periodic, then the $f$-orbit
$$\cO_f(P)=\{f^k(P):k\geq0\}$$
is a finite set, and the size $|\cO_f(P)|$ is the smallest integer $n\geq1$ such that $f^n(P)=P.$ We call this integer the \emph{primitive period of $P$}. We are particularly interested in the set $\A^N(\ZZ)$ of $\ZZ$-points of an affine $N$-space for an integer $N$, with an endomorphism $f:\A^N\to\A^N$ defined over $\ZZ$.

A natural question one can think of is whether there exists an effectively computable constant, depending solely on $N$, which bounds the size of any periodic orbit in $\A^N$. It is a well-known fact, which can be readily shown, that the size of any integral periodic orbit of an affine line is at most 2. Indeed, for any $x, y \in \ZZ$, $x - y$ divides $f(x) - f(y)$ for any polynomial $f(t) \in \ZZ[t]$. In fact, for higher dimensional affine spaces, or more generally for an arbitrary separated scheme of finite type over $\ZZ$, the existence of a universal constant can be deduced by Fakhruddin's work \cite{Fak}. Moreover, Whang demonstrated the existence under more general setting, a set $S$ of endomorphisms and a generalized definition of $S$-periodic orbits \cite{whang}.

In this paper, we present a more elementary proof of the existence than the one given in \cite[Theorem 2]{Fak} or \cite[Theorem 2.3]{whang}, in the sense that it doesn't use any scheme-theoretic argument, with respect to our particular focus on a single endomorphism on $\A^N$. Our main method is analyzing the relationship with a local period obtained through reduction modulo prime number. This method was used in \cite{Li,MS1,MS2,W1989,T1994,zieve} for rational functions in $\PP^1$ having good reduction, and was generalized in \cite{Hutz} for endomorphisms on smooth projective varieties having good reduction. Note that we cannot apply these results directly, as we are not working within a proper scheme, and we are not exclusively considering primes having good reduction.

We prove the following Theorem:
\begin{thm}\label{localglobal}
    Let $f:\A^N\to\A^N$ be an endomorphism of $\A^N$ defined over $\ZZ$. Let $p$ be a prime number, and let $\tilde{f}$ be the reduction modulo $p$ map of $f$. If $P\in\A^N(\ZZ)$ is a periodic point of $f$ with primitive period $n$, then
    $$n=mdp^e,$$
    where $m$ is the primitive period of $\tilde{P}$, the reduction modulo $p$ point of $P$, in $\A^N(\FF_p)$, $d$ is a positive integer less than or equal to $p^N-1$, and $e$ is a nonnegative integer.
\end{thm}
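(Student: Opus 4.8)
The plan is to reduce the problem to a single fixed point modulo $p$ and then to track how the period grows along the tower of reductions modulo the powers $p^j$. Since $f^n(P)=P$, reducing modulo $p$ gives $\tilde{f}^n(\tilde P)=\tilde P$, so $m\mid n$; setting $g=f^m$ and $k=n/m$, one checks that $\tilde g(\tilde P)=\tilde P$ and that $P$ has primitive period $k$ under $g$. It therefore suffices to prove $k=dp^e$ with $1\le d\le p^N-1$. For each $j\ge 1$ I let $k_j$ denote the primitive period of $P\bmod p^j$ under the induced self-map $g_j$ of $(\ZZ/p^j\ZZ)^N$. Then $k_1=1$, the chain of divisibilities $k_j\mid k_{j+1}$ holds (a period modulo $p^{j+1}$ reduces to one modulo $p^j$), and $k_j=k$ once $p^j$ exceeds the diameter of the finite orbit $\{P,g(P),\dots,g^{k-1}(P)\}$, since distinct orbit points then stay distinct modulo $p^j$.

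The first key step is the analysis modulo $p^2$. Fix an integer lift $a$ of $\tilde P$; since $g$ has integer coefficients, expanding the polynomial shows $g(a+py)-g(a)-pJy\in p^2\ZZ^N$ for all $y$, where $J=Dg(a)\bmod p\in\mathrm{M}_N(\FF_p)$, and $g(a)\equiv a+pc\pmod{p^2}$ for a fixed $c\in\FF_p^N$. Hence on the residue disk of $\tilde P$ the mod-$p^2$ dynamics is the affine map $A(y)=Jy+c$ over $\FF_p$, and $k_2$ is the period of $y_0$ (where $P=a+py_0$) under $A$. Decomposing $\FF_p^N=V_0\oplus V_1$ into the nilpotent and invertible parts of $J$, periodicity forces the recurrent coordinate of $y_0$ into $V_1$, where $J|_{V_1}=su$ is a product of a semisimple $s$ and a unipotent $u$; one then reads off that the prime-to-$p$ part of $k_2$ equals $d:=\operatorname{ord}(s)$. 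To bound $d$, note $\FF_p[s]\cong\prod_i\FF_{p^{d_i}}$ with $\sum_i d_i\le\dim V_1\le N$, so $d\mid\operatorname{lcm}_i(p^{d_i}-1)$ and, using $\prod_i(p^{d_i}-1)\le p^{\sum_i d_i}-1$, one gets $1\le d\le p^N-1$.

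The second key step is lifting: I claim $k_{j+1}/k_j$ is a power of $p$ for every $j\ge 2$. Put $h=g^{k_j}$, so $h(P)=P+p^jw+O(p^{j+1})$ for some $w\in\ZZ^N$, and let $B:=Dh(P)\bmod p=J^{k_j}$. Because $d\mid k_2\mid k_j$, all eigenvalues of $B$ lie in $\{0,1\}$, so the minimal polynomial of $B$ on $w$ divides $x^{a}(x-1)^{b}$. The expansion $h^t(P)\equiv P+p^j(I+B+\cdots+B^{t-1})w\pmod{p^{j+1}}$ shows that $t=k_{j+1}/k_j$ is the least $t$ with $(I+B+\cdots+B^{t-1})w\equiv 0$; since $1+x+\cdots+x^{t-1}$ has nonzero constant term, this forces $a=0$, i.e. $w$ lies in $V_1$. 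Then the condition becomes $(x-1)^{b+1}\mid x^{t}-1$ over $\FF_p$, and as the exact power of $(x-1)$ dividing $x^t-1$ is $p^{\,v_p(t)}$, the minimal such $t$ is $p^{\lceil\log_p(b+1)\rceil}$, a power of $p$.

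Combining the two steps, $k/k_2=\prod_{j\ge 2}(k_{j+1}/k_j)$ is a power of $p$, so the prime-to-$p$ part of $k$ equals that of $k_2$, namely $d$; writing $k=dp^e$ and $n=mk=mdp^e$ completes the proof. I expect the lifting step to be the main obstacle: it requires controlling the unipotent and nilpotent parts of the Jacobian by hand and showing that genuine periodicity rules out nilpotent recurrent directions, which is precisely what caps the prime-to-$p$ growth at $d$ and forces every further refinement to multiply the period only by a power of $p$. Because we work on a non-proper space and do not restrict to primes of good reduction, no good-reduction theorem is available, so this Taylor-expansion analysis must be carried out directly.
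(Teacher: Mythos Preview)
Your overall strategy---reduce to $m=1$, linearize on the residue disk via the Taylor expansion, and track the period $k_j$ along the tower $\ZZ/p^j\ZZ$---is essentially the same framework as the paper's proof. The problem is the assertion that the prime-to-$p$ part of $k_2$ \emph{equals} $d:=\operatorname{ord}(s)$, which you then use in the lifting step to claim that $B=J^{k_j}$ has all eigenvalues in $\{0,1\}$ for $j\ge 2$. In fact $k_2$ only detects those eigenvalues of $J$ which actually occur in the decomposition of the particular vector $w_1$ (equivalently, of $c$ and $y_0$); its prime-to-$p$ part therefore \emph{divides} $\operatorname{ord}(s)$ but need not equal it. For a concrete model, take $J=\operatorname{diag}(\lambda,\mu)$ over $\FF_p$ with $\operatorname{ord}(\lambda)$ strictly dividing $\operatorname{ord}(\mu)$ and let $w_1$ be supported in the first coordinate; then $k_2=\operatorname{ord}(\lambda)$, yet $J^{k_2}$ still carries the eigenvalue $\mu^{k_2}\neq 1$.

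This breaks the lifting step, because the displacement vector $w_j$ at level $j$ is not $w_1$: it is determined by the higher-order Taylor coefficients of $g$ and may acquire components along eigendirections of $J$ that were invisible at level $2$. When that happens your computation of the minimal $t$ with $(I+B+\cdots+B^{t-1})w_j=0$ no longer reduces to the unipotent case $(x-1)^{b+1}\mid x^t-1$; instead $t$ must also kill the new eigenvalue, contributing a prime-to-$p$ factor to $k_{j+1}/k_j$. The paper's proof handles exactly this phenomenon by \emph{not} trying to isolate all prime-to-$p$ growth at a single level: it iterates the linearization, at each stage extracting either a factor $p$ (when $\tilde D\tilde v=\tilde v$) or a factor $d_i>1$ (otherwise), and then argues that the product of all the $d_i$ divides the integer $g(\tilde D)\le p^N-1$ of Lemma~\ref{matrixlem}, since each replacement $\tilde D\mapsto\tilde D^{d_i}$ pushes $\tilde D$ toward having only the eigenvalues $0,1$. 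Your argument can be repaired in the same spirit: drop the claim that $k_{j+1}/k_j$ is a $p$-power for $j\ge 2$, and instead show that whenever this ratio has a nontrivial prime-to-$p$ part $d_j$, the order of the semisimple part of $J^{k_{j+1}}$ on $V_1$ is strictly smaller than that of $J^{k_j}$, forcing $\prod_j d_j$ to divide $\operatorname{ord}(s)\le p^N-1$.
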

Throughout this paper, we denote by $\orp$ the $p$-adic valuation of $\ZZ$, normalized as $\orp(p)=1$, for each prime $p$. By applying the theorem above for $p=2$ and $p=3$, we can directly establish an upper bound of $\orp(n)$ for any prime number $p$, which is zero for all but finitely many $p$. This gives an effective upper bound on the size of any integral periodic orbit on $\A^N$, which depends only on $N$. Moreover, applying the result from \cite{Fak} in our formula, we can additionally obtain a better upper bound. In particular, for $N=2$, we prove that any integral periodic orbit has a size less than or equal to 24.
\begin{thm}\label{affineplane}
    Any size of an integral periodic orbit of $\A^2$ is at most 24.
\end{thm}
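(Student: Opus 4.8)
The plan is to feed the structure formula of Theorem \ref{localglobal} into the two smallest primes $p = 2$ and $p = 3$, which together determine exactly which primes and prime powers can divide the primitive period $n$, and then to cap the resulting finite list of candidates using Fakhruddin's effective bound from \cite{Fak}.

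First I would apply Theorem \ref{localglobal} with $p = 2$. Since $\A^2(\FF_2)$ has only four points, the local primitive period $m$ of $\tilde P$ satisfies $m \le 4$, while $d \le 2^2 - 1 = 3$. Writing $n = m\,d\,2^{e}$ and passing to odd parts, the odd part of $n$ equals the product of the odd parts of $m$ and of $d$; as $m \le 4$ forces the odd part of $m$ into $\{1,3\}$ and $d \le 3$ forces the odd part of $d$ into $\{1,3\}$, the odd part of $n$ divides $9$. Hence no prime $\ell \ge 5$ divides $n$ and the exponent of $3$ in $n$ is at most $2$, so $n = 2^{a} 3^{b}$ with $b \le 2$.

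Next I would apply Theorem \ref{localglobal} with $p = 3$. Now $\A^2(\FF_3)$ has nine points, so the local period $m'$ satisfies $m' \le 9$ and $d' \le 3^2 - 1 = 8$. Taking the exponent of $2$ on both sides of $n = m'\,d'\,3^{e'}$ (the factor $3^{e'}$ contributes nothing) gives $a = (\text{exponent of } 2 \text{ in } m') + (\text{exponent of } 2 \text{ in } d') \le 3 + 3 = 6$, since neither $m' \le 9$ nor $d' \le 8$ is divisible by $16$. Combining the two steps, $n = 2^{a} 3^{b}$ with $a \le 6$ and $b \le 2$; already the formula alone yields the uniform bound $n \le 2^{6} 3^{2} = 576$ and confines $n$ to the explicit finite set $S = \{\, 2^{a} 3^{b} : a \le 6,\ b \le 2 \,\}$.

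The final and most delicate step is to sharpen $576$ to $24$. Here I would invoke the effective bound of \cite{Fak} for periodic orbits in $\A^2$: once one knows that bound lies below $32$, the claim follows, since the elements of $S$ that are at most $31$ are exactly $1,2,3,4,6,8,9,12,16,18,24$, so every admissible $n$ satisfies $n \le 24$. I expect the main obstacle to be precisely this interface. The two valuation computations are routine once Theorem \ref{localglobal} is available; the real care lies in extracting from \cite{Fak} a bound that is both valid for $\A^2$ and small enough (below $32$) to collapse $S$ onto its part below $24$, and in applying \cite{Fak} to the correct object so that it genuinely caps the primitive period rather than a coarser quantity.
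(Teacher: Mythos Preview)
Your first two steps are correct and match the paper's use of Theorem \ref{localglobal}: applying it at $p=2$ gives $n=2^a3^b$ with $b\le 2$ and no prime $\ge 5$ dividing $n$, and applying it at $p=3$ gives $a\le 6$, hence $n\mid 576$. The paper does exactly this kind of computation.

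The gap is in your final step. You propose to treat \cite{Fak} as a black box that outputs a numerical bound on $n$ for $\A^2$, and you need that number to be below $32$. You do not verify this, and in fact the only result from \cite{Fak} that the paper invokes is the local-algebra statement recorded here as Theorem \ref{Fak}; when that statement is fed through the paper's own machinery (Theorem \ref{primefactor}), the bound it produces for $N=2$ is $n\mid 2^{4}\cdot 3^{2}=144$, which is well above $32$. So the interface you flag as ``the main obstacle'' is not just delicate --- it does not close as stated.

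The paper reaches $24$ by a different route: it does \emph{not} use Fakhruddin's work as a global cap on $n$, but rather uses Theorem \ref{Fak} inside the proof of Theorem \ref{primefactor} to show that, after stripping off the local period $m$ and the odd part of $d$ and then killing the action on $\fm/\fm^2$ (which takes $h\mid g(\tilde D)$ iterates), the remaining $2$-power period is at most $2^1$. For $N=2$ one has $g(\tilde D)\le 3$, and the proof of Theorem \ref{affineplane} then splits on whether $d=3$ or not: if $d=3$ then $g=3$ is odd, so $e_0=0$ and $e\le 1$, giving $n\le 4\cdot 3\cdot 2=24$; otherwise $e_0\le 1$ and $j\le 2$, giving $n\le 16$. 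To repair your argument you would need to replace the black-box appeal to \cite{Fak} with this finer control on $\mathrm{ord}_2(n)$ coming from the interplay between $d$, $g(\tilde D)$, and Theorem \ref{Fak}.
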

Let us note that the bounds discussed in this paper may not be optimal. It is an interesting question to obtain an optimal value, as knowing the optimal upper bound provides an efficient algorithm for determining periodicity when a point and an endomorphism are given. Currently, there is no known example of an integral periodic orbit in $\A^2$ with a size larger than 6. Hence, one might expect that the maximal length of the periodic orbit emerges in the linear case.
\begin{ques}
    Can the size of an integral periodic orbit in $\A^N$ exceed the maximal order in $\GL_{N+1}(\ZZ)$?
\end{ques}
\subsection*{Acknowledgements}
I would like to thank to Professor Junho Peter Whang for suggesting the problem and offering valuable guidance throughout this project. This work was supported by 2023 summer Undergraduate Research Internship from College of Natural Sciences, Seoul National University.
\section{Reduction Modulo Prime}
In this section, we prove Theorem \ref{localglobal}. We first prove the following Lemma, which is a slight generalization of \cite[Corollary 2]{glnfp}.
\begin{lem}\label{matrixlem}
    Let $N$ be a positive integer and $p$ a prime number. Let $A$ be a $N\times N$ matrix with coefficients in $\FF_p$. Let $g = g(A)$ be the smallest positive integer such that $A^g$ is diagonalizable over $\FF_p$ and has only 0 and 1 as eigenvalues. Then
    $$g\leq p^N-1.$$
\end{lem}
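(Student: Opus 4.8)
The plan is to reduce the statement to two standard facts about the action of $A$ on $V=\FF_p^N$. First I would record the key reformulation: a matrix $M$ over $\FF_p$ is diagonalizable over $\FF_p$ with all eigenvalues in $\{0,1\}$ if and only if it is idempotent, i.e. $M^2=M$. One direction is immediate; for the other, $M^2=M$ forces the minimal polynomial to divide $t(t-1)$, which is separable and splits over $\FF_p$. Thus $g(A)$ is simply the least $g\ge 1$ with $A^{2g}=A^g$, and in particular $g(A)$ exists.

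Next I would invoke the Fitting decomposition $V=V_0\oplus V_1$, where $V_0=\ker A^N$ and $V_1=\operatorname{im}A^N$ are $A$-invariant, $A|_{V_0}$ is nilpotent, and $A|_{V_1}$ is invertible; set $n_0=\dim V_0$ and $n_1=\dim V_1$, so $n_0+n_1=N$. Since the decomposition is preserved by every power of $A$, the condition $A^{2g}=A^g$ splits: on $V_0$ it says $A^g|_{V_0}=0$ (an idempotent nilpotent operator is $0$), which holds exactly when $g\ge r$, where $r\le n_0$ is the nilpotency index of $A|_{V_0}$; on $V_1$ it says $A^g|_{V_1}=I$ (an idempotent invertible operator is the identity), which holds exactly when $h\mid g$, where $h=\mathrm{ord}(A|_{V_1})$. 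Hence $g(A)$ is the least positive multiple of $h$ that is at least $r$.

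The crux is bounding $h$, and this is exactly where the generalization of the cited corollary enters. I would consider the commutative $\FF_p$-subalgebra $S=\FF_p[A|_{V_1}]\subseteq \End(V_1)$, whose $\FF_p$-dimension equals the degree of the minimal polynomial of $A|_{V_1}$ and is therefore at most $n_1$. Because $A|_{V_1}$ is invertible it lies in the unit group $S^\times$, so by Lagrange $h$ divides $|S^\times|$; and since $0\notin S^\times$ we get $|S^\times|\le |S|-1=p^{\dim_{\FF_p}S}-1\le p^{n_1}-1$. Therefore $h\le p^{n_1}-1$. (Equality is attained by a Singer cycle, where $S$ is the field $\FF_{p^{n_1}}$.)

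Finally I would assemble the estimate. If $n_1=0$ then $A$ is nilpotent and $g(A)=r\le n_0=N\le p^N-1$; if $n_0=0$ then $g(A)=h\le p^N-1$. In the remaining case $n_0,n_1\ge 1$, writing $g(A)$ as the least multiple of $h$ that is $\ge r$ gives $g(A)\le r+h-1\le n_0+p^{n_1}-2$, and it remains to check the elementary inequality $n_0+p^{n_1}-2\le p^{n_0+n_1}-1$, equivalently $n_0-1\le p^{n_1}(p^{n_0}-1)$, which follows since $p^{n_1}\ge 1$ and $p^{n_0}-1\ge 2^{n_0}-1\ge n_0-1$. I expect the only genuinely substantive step to be the order bound $h\le p^{n_1}-1$ of the third paragraph; the reformulation, the Fitting splitting, and the concluding arithmetic are all routine once that estimate is in hand.
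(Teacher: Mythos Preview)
Your argument is correct, but it follows a genuinely different route from the paper's. The paper does not invoke the Fitting decomposition or Lagrange's theorem at all. After disposing of the nilpotent case via Cayley--Hamilton (as you do), it argues directly with the whole matrix: since $\FF_p[A]$ has at most $p^N-1$ nonzero elements, pigeonhole among the powers $A,A^2,\dots,A^{p^N}$ yields a relation $A^k=A^{p^N}$ with $1\le k<p^N$; setting $s=p^N-k$ one has $A^k(A^s-I)=0$, hence $A^k(A^{ts}-I)=0$ for all $t\ge1$, and choosing $t=\lceil k/s\rceil$ gives $ts\ge k$ while $ts<p^N$, so $A^{ts}(A^{ts}-I)=0$ and $A^{ts}$ is idempotent. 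Your approach instead splits $V=V_0\oplus V_1$, bounds the order $h$ of $A|_{V_1}$ by $|S^\times|\le p^{n_1}-1$ via Lagrange in $S=\FF_p[A|_{V_1}]$, and then combines with the nilpotency index on $V_0$ by an elementary inequality. Both proofs ultimately exploit the same fact that $\dim_{\FF_p}\FF_p[A]\le N$, but the paper's pigeonhole-plus-multiple trick is shorter and avoids any structural decomposition, whereas your argument is more conceptual and in fact delivers the sharper intermediate bound $g(A)\le n_0+p^{n_1}-2$ (or $h$, or $r$, in the degenerate cases), which could be useful if one wanted to refine the estimates downstream.
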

\begin{proof}
If $A$ has 0 as its unique eigenvalue, then by Cayley-Hamilton theorem, $A^N=0$, and we are done. Otherwise, again by Cayley-Hamilton theorem, $\FF_p[A]$ has at most $p^N-1$ nonzero elements. Hence, there exists a positive integer $k<p^N$ such that
\begin{equation}\label{kl}
    A^k=A^{p^N}.
\end{equation}
Let $t$ be the smallest positive integer such that
$$t(p^N-k)\geq k.$$
Then $(t-1)(p^N-k)<k$, so $t(p^N-k)<p^N$. Now by \eqref{kl},
$$A^k(A^{p^N-k}-1)=0,$$
so
$$A^k(A^{t(p^N-k)}-1)=0$$
and
$$A^{t(p^N-k)}(A^{t(p^N-k)}-1)=0.$$
Hence $A^{t(P^N-k)}$ is diagonalizable over $\FF_p$ with eigenvalues 0 and 1.
\end{proof}
Now we prove Theorem \ref{localglobal}.
\begin{proof}[Proof of Theorem \ref{localglobal}]
    Let $P=(a_1,\ldots,a_N)\in\A^N(\ZZ)$ be a periodic point of $f$ with primitive period $n$. By conjugating
    $$((x_1,\ldots,x_N)\mapsto(x_1-a_1,\ldots,x_N-a_N))\in\Aut(\A^N/\ZZ),$$
    we may assume $P=(0,\ldots,0)$. Let $\tilde{P}\in\A^N(\FF_p)$ be a reduction modulo $p$ point of $P$. 
     Since
    $$\tilde{f}^n(\tilde{P})=\widetilde{f^n(P)}=\tilde{P},$$
    $\tilde{P}$ is a periodic point of $\tilde{f}$. If $m$ is the primitive period of $\tilde{P}$ in $\A^N(\FF_p)$, then $m$ divides $n$ by the minimality of $m$. Hence replacing $f$ by $f^m$, we may assume $m=1$.

    If $n=1$, we are done. Assume $n>1$, and for any $k\geq1$, let
    $$f^k(P)=f^k(0,\ldots,0)=(A_1^{(k)},\ldots,A_N^{(k)}).$$
    Since we are assuming $\tilde{f}(\tilde{P})=\tilde{P}$, we see that $p$ divides $A_i^{(1)}$ for all $i=1,2,\ldots,N$. Let
    $$r=\min_i\{\orp(A_i^{(1)})\}\geq1.$$
Let $f=(f_1,\ldots,f_N)$. As $A_i^{(1)}$ are constant terms of $f_i$ for each $i=1,\ldots,N$, we have $\orp(A_i^{(2)})\geq r$ for all $i=1,\ldots,N$. Hence $\min_i\{\orp(A_i^{(2)})\}\geq r$, and inductively,
    \begin{equation}\label{order}
        \min_i\{\orp(A_i^{(k)})\}\geq r
    \end{equation}
    for any $k\geq1$.
Let
        $$D=D(f)=\left(
            \frac{\partial{f_i}(0,\ldots,0)}{\partial{x_j}}
        \right)_{1\leq i,j\leq N}\in\operatorname{Mat}_{N}(\ZZ).$$
By \eqref{order}, for $k\geq1$,
        $$\begin{pmatrix}
            A_{1}^{(k+1)}\\\vdots\\A_N^{(k+1)}
        \end{pmatrix}\equiv\begin{pmatrix}
            A_{1}^{(1)}\\\vdots\\A_{N}^{(1)}
        \end{pmatrix}+D\begin{pmatrix}
            A_{1}^{(k)}\\\vdots\\A_{N}^{(k)}
        \end{pmatrix}\pmod{p^{2r}}$$
since higher terms are divided by $p^{2r}$. Considering elements of $\A^N(\ZZ)$ as vectors in $\ZZ^n$, we inductively have
    $$f^{k+1}(P)\equiv(I+D+\cdots+D^k)f(P)\pmod{p^{2r}}$$
    for $k\geq1$. In particular,
    $$0=f^n(P)\equiv(I+D+\cdots+D^{n-1})f(P)\pmod{p^{2r}}.$$
Let $A_i^{(1)}=p^r\al_i$ for $i=1,\ldots,N$. Let $v=(\al_1,\ldots,\al_N)^T$ be the nonzero vector in $\ZZ^N$, whose reduction modulo $p$ is also nonzero vector in $\FF_p^N$ by definition of $r$. Then
    $$0\equiv(I+D+\cdots+D^{n-1})v\pmod{p^{r}}.$$
    In particular,
    \begin{equation}\label{eq1}
        0=(I+\tilde{D}+\cdots+\tilde{D}^{n-1})\tilde{v}
    \end{equation}
as a vector in $\FF_p^N$, where $\tilde{D}$ and $\tilde{v}$ are reduction modulo $p$ matrix and vector of $D$ and $v$, respectively. Multiplying $(I-\tilde{D})$ to the both side of \eqref{eq1}, we have
$$\tilde{D}^n\tilde{v}=\tilde{v}.$$
Let $d\geq1$ be the minimal integer such that
$$\tilde{D}^d\tilde{v}=\tilde{v}.$$

\noindent\textbf{Case 1.} If $d=1$, then \eqref{eq1} implies that $n\tilde{v}=0$. Hence $p$ divides $n$, since $\tilde{v}$ is a nonzero vector in $\FF_p^N$.

\noindent\textbf{Case 2.} If $d>1$, then $d$ divides $n$ by the minimality of $d$. The above equality implies that $\tilde{D}^d$ has eigenvector $\tilde{v}$ with eigenvalue 1. Let $g=g(\tilde{D})$ be the integer related to $\tilde{D}$ in Lemma \ref{matrixlem}. Then $d$ divides $g$.

Now, replace $f$ by $f^p$ and $f^d$ for Case 1 and Case 2, respectively, and apply the above process iteratively. It ends after a finite number of iterations since $n$ is finite. Note that both the matrix $D$ and the vector $v$ change during the replacement process. However
    $$\widetilde{D(f^k)}=\widetilde{D(f)}^k$$
for any $k\geq1$ since $m=1$, so replacing $\tilde{D}$ is same as taking power of $\tilde{D}$. Moreover, after raising $\tilde{D}$ to a suitable power $d_0$ during the process, where $d_0$ divides $g$, the situation eventually becomes restricted to Case 1. Thus
$$n=d_0p^e$$
for some $d_0$ dividing $g\leq p^N-1$ and $e\geq0$.
\end{proof}
As we have mentioned, Theorem 1.1 directly proves the existence of an effective uniform upper bound on the size of an integral periodic orbit.
\begin{thm}
    Let $N\geq1$ be an integer. There exists an effective constant $C(N)>0$ such that, for any endomorphism $f:\A^N\to\A^N$ defined over $\ZZ$ and a periodic point $P\in\A^N(\ZZ)$ of $f$, the size $|\cO_f(P)|$ of the orbit is less than or equal to $C(N)$.
\end{thm}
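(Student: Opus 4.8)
The plan is to read the bound off directly from the factorization $n=m\,d\,p^{e}$ supplied by Theorem~\ref{localglobal}, the one genuinely useful idea being that the auxiliary prime in that theorem is ours to choose. To control the $p$-adic valuation $\orp(n)$ of the primitive period for a \emph{fixed} prime $p$, I would apply Theorem~\ref{localglobal} not at $p$ but at a \emph{different} prime $q\neq p$: in the resulting factorization $n=m_q d_q q^{e_q}$ the factor $q^{e_q}$ is a $p$-adic unit, so the whole $p$-contribution to $n$ is carried by $m_q$ and $d_q$, each of which is bounded purely in terms of $q$ and $N$. Since $\orp(n)=0$ for all but finitely many $p$, multiplying these local estimates together will yield the desired uniform constant.

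Concretely, I would fix a periodic point $P$ with primitive period $n$ and estimate $\orp(n)$ prime by prime. For odd $p$, apply Theorem~\ref{localglobal} with the prime $2$ to get $n=m\,d\,2^{e}$, where $m$ is the primitive period of $\tilde P$ in $\A^N(\FF_2)$ and $d\le 2^N-1$. Because the orbit of $\tilde P$ lies in the $2^N$-element set $\A^N(\FF_2)$ we have $m\le 2^N$, and since $\orp(2^{e})=0$ and $\orp(k)\le\log_p k$ for any positive integer $k$,
$$\orp(n)=\orp(m)+\orp(d)\le\log_p\bigl(m\,d\bigr)<\log_p\bigl(2^{2N}\bigr)=\frac{2N}{\log_2 p}.$$
For the prime $p=2$ the same argument run with the auxiliary prime $3$ gives $m\le 3^N$, $d\le 3^N-1$, hence $\operatorname{ord}_2(n)\le\log_2\bigl(m\,d\bigr)<2N\log_2 3$. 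This settles every prime.

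The decisive step is the finiteness observation that turns these local bounds into a single global one: the estimate $\orp(n)<2N/\log_2 p$ forces $\orp(n)=0$ as soon as $p>2^{2N}$, so only the finitely many primes $p\le 2^{2N}$ can divide $n$. Writing $n=\prod_{p\le 2^{2N}}p^{\orp(n)}$ and substituting the bounds above yields the explicit, manifestly effective constant
$$C(N)=2^{\lfloor 2N\log_2 3\rfloor}\prod_{2<p\le 2^{2N}}p^{\lfloor 2N/\log_2 p\rfloor},$$
which depends only on $N$ and satisfies $|\cO_f(P)|=n\le C(N)$.

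I do not expect a deep obstacle, since Theorem~\ref{localglobal} does the substantive work; the only point demanding care is the bookkeeping guaranteeing that the product over primes is finite and effectively computable. In particular I would make sure that in each estimate the bound $m\le q^N$ on the local primitive period and the bound $d\le q^N-1$ are invoked with an auxiliary prime $q\neq p$, so that the factor $q^{e_q}$ contributes nothing to the valuation being measured; conflating $p$ with the auxiliary prime is the one error that would collapse the argument.
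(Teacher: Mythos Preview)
Your proposal is correct and follows essentially the same approach as the paper's proof: apply Theorem~\ref{localglobal} at the auxiliary prime $2$ to bound $\orp(n)$ for all odd $p$ and to see that only finitely many primes can divide $n$, then apply it at $3$ to bound $\operatorname{ord}_2(n)$. The paper is terser and observes the slightly sharper fact that no prime exceeding $2^N$ (rather than $2^{2N}$) can divide $n$, since each of $m\le 2^N$ and $d\le 2^N-1$ individually cannot have a prime factor larger than $2^N$; but your explicit formula for $C(N)$ is a welcome addition that the paper omits.
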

\begin{proof}
    Take any $f\in\End(\A^N/\ZZ)$ and let $n$ be a size of an integral periodic orbit of $f$. By applying Theorem \ref{localglobal} with $p=2$, we have
    $$n=md2^e,$$
    where $m,d,e$ are integers such that $m\leq 2^N$, $d\leq 2^N-1$, and $e\geq0$.
    Then $\operatorname{ord}_q(n)$ is bounded for any prime $q$, except for $2$. Moreover, prime numbers greater than $2^N$ cannot appear in the factorization of $n$. Applying Theorem \ref{localglobal} with $p=3$, we can also bound $\operatorname{ord}_2(n)$. If $N$ is given, we can effectively calculate each bound of $\orp(n)$ for any prime number $p$.
\end{proof}
\section{A Tighter Upper Bound}

In this section, we focus on constructing a lower value of the bound. We first recall the result of Fakhruddin.
\begin{thm}[{\cite[Proposition 2.2]{Fak}}]\label{Fak}
    Let $p$ be a prime number. Let $(A,\fm)$ be a local sub-$\ZZ_p$-algebra of $(\ZZ_p)^{p^n}$ of rank $p^n$ which is preserved by the automorphism $\sigma$ given by cyclic permutation of the coordinates. Furthermore, assume that $\sigma$ acts trivially in $\fm/\fm^2$. Then $n=0$ if $p\not=2$ and $n\leq 1$ if $p=2$.
\end{thm}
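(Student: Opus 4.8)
The plan is to translate the ring-theoretic hypothesis into a statement about the $\ZZ_p$-linear operator $\delta=\sigma-1$ acting on the filtered algebra $A$, and then to extract the bound on $n$ from a $p$-adic valuation bookkeeping governed by the binomial coefficients $\binom{p^n}{k}$. First I would record the structure of $A$. Since $A$ has full rank $p^n$ inside $(\ZZ_p)^{p^n}$, the inclusion becomes an isomorphism after inverting $p$, so $A\otimes\QQ_p=\prod_{i\in\ZZ/p^n}\QQ_p$ and $\sigma$ permutes the $p^n$ factors in a single cycle; hence $\sigma$ has exact order $p^n$ and $A^{\langle\sigma\rangle}=\ZZ_p\cdot 1$. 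Each coordinate projection $A\to\ZZ_p$ is a surjection of $\ZZ_p$-algebras, so its kernel is a minimal prime and, $A$ being local, is contained in $\fm$; comparing with the maximal ideal of $\ZZ_p$ then shows $A/\fm=\FF_p$ and $\fm=A\cap p(\ZZ_p)^{p^n}$. In particular every element of $A$ has all coordinates congruent modulo $p$, whence $\delta(A)=(\sigma-1)(A)\subseteq\fm$.

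Next I would show that $\delta$ strictly increases the $\fm$-adic filtration: $\delta(\fm^i)\subseteq\fm^{i+1}$ for all $i\ge 0$. The case $i=0$ is the previous paragraph and the case $i=1$ is exactly the hypothesis that $\sigma$ acts trivially on $\fm/\fm^2$. For the inductive step one uses that $\fm^i$ is spanned by products of $i$ elements of $\fm$ together with the fact that $\sigma$ is a ring homomorphism: writing $\delta(ab)=\sigma(a)\delta(b)+\delta(a)b$ and reducing modulo a high power of $\fm$, the associated graded map $D:=\mathrm{gr}(\delta)$ becomes a derivation of degree $+1$ on the graded $\FF_p$-algebra $S:=\mathrm{gr}_{\fm}(A)$ that kills $S_0$ and $S_1$. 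Since $S$ is generated over $\FF_p=S_0$ in degree $1$, this forces $\sigma$ to act trivially on every $\fm^i/\fm^{i+1}$; equivalently, $\delta$ is topologically nilpotent and raises filtration-degree by at least one.

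The heart of the argument is then to combine the relation $\sigma^{p^n}=1$, that is
\[
\sum_{k=1}^{p^n}\binom{p^n}{k}\delta^k=0\qquad\text{in }\End_{\ZZ_p}(A),
\]
with the location of $p$ in the filtration. Let $v\ge 1$ be determined by $p\in\fm^v\setminus\fm^{v+1}$; then multiplication by $\binom{p^n}{k}$ raises filtration-degree by $v\cdot\orp\!\binom{p^n}{k}=v\,(n-\orp(k))$, while $\delta^k$ raises it by at least $k$. Thus the $k$-th summand lies in filtration-degree at least $W(k):=v\,(n-\orp(k))+k$, and the summands of least degree must have cancelling leading terms in $S$. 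Writing $k=p^{a}m$ with $p\nmid m$, the minimum of $W$ is attained at some $k=p^{a}$ and equals $\min_{0\le a\le n}\big(v(n-a)+p^{a}\big)$; the two ends $a=0$ (the tame term $p^{n}\delta$, of weight $vn$) and $a=n$ (the term $\delta^{p^n}$, of weight $p^n$) compete, while a single strictly minimal vertex is forbidden, since its leading form $\bar p^{\,n-a}D^{p^{a}}$ would then be forced to vanish. Requiring this Newton polygon to have no isolated lowest vertex constrains $v$ and $n$ jointly, and carrying the analysis out is expected to yield $n=0$ for odd $p$ and $n\le 1$ for $p=2$.

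The step I expect to be the genuine obstacle is converting this valuation bookkeeping into a clean contradiction, because $S=\mathrm{gr}_{\fm}(A)$ need not be a domain and the graded derivation $D$ may annihilate elements prematurely, so ``the leading form must vanish'' does not by itself contradict $\delta\neq 0$. Controlling this amounts to analyzing $D$ together with the initial form $\bar p$ of $p$, i.e.\ to controlling the wild ramification of the order $A$ over $\ZZ_p$, and it is exactly here that $p=2$ is exceptional: for the order-$2$ element $\tau=\sigma^{2^{n-1}}$ the relation collapses to $(\tau-1)^2=-2(\tau-1)$, which admits the nontrivial solution realized by the order $\{(a,b)\in(\ZZ_2)^2:a\equiv b\bmod 4\}$ at $n=1$, whereas for odd $p$ no analogue survives the tangent-triviality constraint. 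Making this final dichotomy rigorous is the technical core, and is where I would follow the ramification-theoretic computation underlying \cite[Proposition 2.2]{Fak}.
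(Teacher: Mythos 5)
Note first that the paper itself offers no proof of this statement: it is quoted verbatim from \cite[Proposition 2.2]{Fak}, so your attempt can only be measured against the cited source and on its own terms. Your preliminary reductions are correct and well executed: the coordinate projections are surjective, every kernel is a prime contained in $\fm$ by locality, hence $\fm=A\cap p(\ZZ_p)^{p^n}$ and $\delta(A)=(\sigma-1)(A)\subseteq\fm$; the Leibniz induction $\delta(\fm^i)\subseteq\fm^{i+1}$ (using $\sigma(\fm)=\fm$) is sound; the expansion of $\sigma^{p^n}=1$ into $\sum_{k=1}^{p^n}\binom{p^n}{k}\delta^k=0$ with the weight bound $W(k)=v\,(n-\orp(k))+k$ via Kummer's theorem is the right bookkeeping; and your rank-$2$ order $\{(a,b)\in(\ZZ_2)^2:a\equiv b\bmod 4\}$ is exactly the sharp example at $p=2$, $n=1$ (the more obvious candidate $a\equiv b\bmod 2$ fails the tangent-triviality hypothesis, so you chose correctly).

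The genuine gap is the one you yourself flag, and it is not a technicality but the entire content of the proposition: the Newton-polygon step is never carried out, and as stated it cannot be. Since $\mathrm{gr}_{\fm}(A)$ need not be reduced and the graded derivation $D$ may annihilate initial forms, the conclusion ``a strictly minimal vertex forces its leading form $\bar p^{\,n-a}D^{p^a}$ to vanish'' produces no contradiction whatsoever --- vanishing of a leading form is perfectly consistent with $\delta\neq0$. To close the argument one must exhibit a test element $x\in\fm\setminus\fm^2$ on which the filtration jumps of $\delta^k(x)$ and of multiplication by $p$ are \emph{exact}, and one needs an a priori relation between $v$ (the $\fm$-adic level of $p$) and $n$; this is precisely the ramification-theoretic computation that distinguishes $p=2$ from odd $p$, and your final sentence defers it to the very reference whose statement is being proved. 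As written, the proposal is therefore a correct reduction of the theorem to its core difficulty, not a proof of it; the $p=2$ identity $(\tau-1)^2=-2(\tau-1)$ explains why a counterexample \emph{can} exist there, but nothing in the argument shows that $n\geq1$ is impossible for odd $p$ or that $n\geq2$ is impossible for $p=2$. (A minor slip along the way: the tame term $k=1$ has weight $vn+1$, not $vn$.)
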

Applying Theorem \ref{Fak} for $p=2$, we can obtain an additional restriction to the size of integral periodic orbit.
\begin{thm}\label{primefactor}
    Let $N$ be an integer. Let $p(N)$ be the largest prime number less than $2^N$. Then any size of an integral periodic orbit of $\A^N$ divides
    $$2^{2N}\prod_{\substack{p : \text{prime}\\ 3\leq p\leq p(N)}}p^{2[N\log_p2]}.$$
\end{thm}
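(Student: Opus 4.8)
The plan is to bound $\operatorname{ord}_q(n)$ for every prime $q$ and to check in each case that this valuation does not exceed the exponent of $q$ in the stated product; since this covers all primes (with the bound $0$ for those not appearing), it shows that $n$ divides the product. The starting point is Theorem \ref{localglobal} applied with $p=2$, which writes
$$n = m\,d\,2^{e},$$
where $m$ is the primitive period of $\tilde P$ in $\A^N(\FF_2)$, so $m\le 2^N$, while $d\le 2^N-1$ and $e\ge 0$. The elementary observation driving the estimates is that if $\ell$ is a positive integer with $\ell\le 2^N$, then $q^{\operatorname{ord}_q(\ell)}\mid \ell$ forces $q^{\operatorname{ord}_q(\ell)}\le 2^N$, hence $\operatorname{ord}_q(\ell)\le [N\log_q 2]$.

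For an odd prime $q$ the factor $2^{e}$ is a unit at $q$, so $\operatorname{ord}_q(n)=\operatorname{ord}_q(m)+\operatorname{ord}_q(d)$. Applying the observation to $m\le 2^N$ and to $d\le 2^N-1$ gives $\operatorname{ord}_q(n)\le 2[N\log_q 2]$, matching the exponent in the product. Moreover, if $q>p(N)$ then $q>2^N$ (as $2^N$ is not prime for $N\ge 2$), whence $m\le 2^N<q$ and $d<q$ give $\operatorname{ord}_q(n)=0$; thus no prime exceeding $p(N)$ divides $n$, and the product correctly ranges only over $3\le p\le p(N)$.

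It remains to treat $q=2$, where $\operatorname{ord}_2(n)=\operatorname{ord}_2(m)+\operatorname{ord}_2(d)+e\le N+(N-1)+e$ by the same observation. The task is therefore to prove $e\le 1$, and this is exactly where Theorem \ref{Fak} enters. Following the final stage of the proof of Theorem \ref{localglobal}, after replacing $f$ by $f^{md}$ the point $P$ has primitive period precisely $2^{e}$, its reduction is a fixed point, and $\sigma:=f^{md}$ permutes the orbit $\{P,\sigma(P),\dots,\sigma^{2^{e}-1}(P)\}$ cyclically. Evaluating polynomials at these $2^{e}$ distinct points gives a $\ZZ_2$-algebra homomorphism $\ZZ_2[x_1,\dots,x_N]\to(\ZZ_2)^{2^{e}}$; let $A\subseteq(\ZZ_2)^{2^{e}}$ be its image, a subalgebra preserved by the coordinate permutation $\sigma$. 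Since the points are distinct, $A\otimes\QQ_2=\QQ_2^{2^{e}}$, so $A$ has rank $2^{e}$; since they all reduce to a single point modulo $2$, the ring $A$ is local with maximal ideal $\fm$. In this setup Theorem \ref{Fak} forces $e\le 1$, giving $\operatorname{ord}_2(n)\le 2N$ and completing the divisibility.

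I expect the main obstacle to be verifying the last hypothesis of Theorem \ref{Fak}, that $\sigma$ acts trivially on $\fm/\fm^2$. The cotangent space $\fm/\fm^2$ is generated by the classes of $2$ and of the $x_i-a_i$, and the action of $\sigma$ on the linear part is governed by the reduction of the derivative matrix $D(f^{md})$; the content of reaching ``Case 1'' in the proof of Theorem \ref{localglobal} should be precisely that this induced action is trivial modulo $2$. Care is needed to confirm that the stage at which the iteration stabilizes to Case 1 yields a genuinely trivial action on all of $\fm/\fm^2$, not merely on the distinguished eigenvector $\tilde v$, and to secure the locality and full-rank claims after the reductions; once these are in place, Fakhruddin's bound applies verbatim.
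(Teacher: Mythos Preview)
Your treatment of the odd primes is fine and matches the paper. The gap is exactly where you flagged it: you cannot conclude $e\le 1$ by applying Theorem~\ref{Fak} to $\sigma=f^{md}$, because reaching ``Case~1'' in the proof of Theorem~\ref{localglobal} only says that $\tilde D^{d}$ fixes the particular vector $\tilde v$, not that it acts trivially on all of $\fm/\fm^{2}$. For instance, with $N=2$ and $\tilde D=\begin{psmallmatrix}0&1\\1&0\end{psmallmatrix}$ one can have $\tilde v=(1,1)^{T}$ fixed by $\tilde D$, so $d=1$, yet $\tilde D$ itself has order $2$ on $\FF_{2}^{2}$; the induced action on $\fm/\fm^{2}$ is then nontrivial and the hypothesis of Theorem~\ref{Fak} fails for $\sigma=f^{m}$. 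In such a situation the paper's own argument only yields $e\le 2$, not $e\le 1$, so your claimed inequality is not merely unverified but can actually fail as stated.

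The paper repairs this not by proving triviality after the $d$-th power, but by inserting an extra replacement. After reducing to primitive period $2^{j}$ (with $j=e+\operatorname{ord}_2 d$), it lets $h=2^{e_{0}}h'$ be the exact order of the induced map on $\fm/\fm^{2}$; since this map is a quotient of $\begin{psmallmatrix}1&0\\0&\tilde D\end{psmallmatrix}$, $h$ divides $g(\tilde D)\le 2^{N}-1$, whence $e_{0}\le N-1$. Replacing $f$ by $f^{h}$ forces the cotangent action to be trivial \emph{by construction}, and only then does Theorem~\ref{Fak} give the residual exponent $k\le 1$. The bookkeeping becomes $\operatorname{ord}_2(n)=\operatorname{ord}_2(m)+j\le N+(e_{0}+k)\le N+(N-1)+1=2N$, which recovers the stated exponent. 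So the missing idea is that one must pass to the \emph{order of the cotangent action} (a further divisor of $g$, unrelated to the $d$ coming from the eigenvector $\tilde v$) before Fakhruddin's theorem applies.
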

\begin{proof}
    Take $f\in\End(\A^N/\ZZ)$, and let $P\in\A^N(\ZZ)$ be a periodic point of $f$ with primitive period $n$. By Theorem \ref{localglobal} with $p=2$, we have
    $$n=md2^e$$
    for local period $m\leq 2^N$, a nonnegative integer $d\leq 2^N-1$, and an integer $e\geq0$. Hence for prime numbers $p\not=2$, we obtain the result.
    
    Replacing $f$ by $f^m$, we may assume $\tilde{P}$ is a fixed point of $\tilde{f}$.  Let $d=d_12^{e'}$, where $d_1$ is an odd integer. Again replacing $f$ by $f^{d_1}$, we may assume that $P$ has primitive period $2^j=2^{e+e'}$. Now consider $f$ as an endomorphism defined over $\ZZ_2$ and $P$ as a $\ZZ_2$-point of $\A_{\ZZ_2}^n$.
    
    Let
    $$Z=\bigcup_{i=0}^{2^j-1}f^i(P)(\spec\ZZ_2)$$
    be a closed subscheme of $\A_{\ZZ_2}^n$ endowed with the reduced closed subscheme structure. Then $Z=\spec A$ for some local sub-$\ZZ_2$-algebra $(A,\fm)$ of $(\ZZ_2)^{2^j}$ rank $2^j$. Moreover, $f$ induces an automorphism of $A$ given by cyclic permutation of the coordinates. The map
    $$f_{Z,*}:\fm/\fm^2\to\fm/\fm^2$$
    is a restriction of $f_*:\fn/\fn^2\to\fn/\fn^2$, where $\fn$ is the maximal ideal of a stalk of the closed point of $Z$ in $\A_{\ZZ_2}^n$. Note that $f_*$ is represented by
    $$\begin{pmatrix}
        1&0\\0&\tilde{D}
    \end{pmatrix}$$
    as a $\FF_2$-linear map, where $\tilde{D}$ is the $N\times N$ matrix with coefficients in $\FF_2$ defined in the proof of Theorem \ref{localglobal}. Let $h=2^{e_0}h'$ be the least integer such that $(f_{Z,*})^h$ is the identity map. Then $h$ divides $g=g(\tilde{D})$ in Lemma \ref{matrixlem}, so $e_0$ is smaller than $N$.
    
    Replace $f$ by $f^h$, and let $k=\max\{0,j-e_0\}$. Then $P$ has primitive period $2^k$, and $A$ is a local sub-$\ZZ_2$-algebra of $(\ZZ_2)^{2^k}$ of rank $2^k$ satisfying the assumption of Theorem \ref{Fak}, so $k\leq 1$. Hence,
    \[
        \operatorname{ord}_2(n)=\operatorname{ord}_2(m)+j
        \leq N+e_0+k\leq 2N-1+k\leq2N,
    \]
    which proves the assertion for $p=2$.
\end{proof}
By analyzing the integers that appear in the proof of Theorem \ref{primefactor}, we can derive more improved upper bound. In particular, we prove Theorem \ref{affineplane}.
\begin{proof}[Proof of Theorem \ref{affineplane}]
    Let $n$ be a size of an integral periodic orbit of $\A^2$. We have shown that
    $$n=md2^e$$
    for some $m\leq 4$ and $d\leq 3$. Recall that $d$ divides an integer $g\leq 2^2-1=3$. We use $j$, $h$ and $e_0$ as the same notation in the proof of Theorem \ref{primefactor}. 
    \begin{enumerate}
        \item[(i)] If $d=3$, then $g=3$ and $e_0=0$ since $h$ divides $g$. Hence $e=j\leq k\leq1$, and $n\leq 4\cdot3\cdot2=24$.
        \item[(ii)] Otherwise, $e_0\leq1$, and
        $$j\leq k+e_0\leq 2,$$
        so $n\leq 16$.
    \end{enumerate}
    In any cases, we can conclude that $n\leq 24$.
\end{proof}
\bibliographystyle{abbrv}
\bibliography{reference}
\end{document}